\def\th@plain{%
  \upshape 
}
\renewenvironment{proof}[1][\proofname]{\par
  \pushQED{\qed}%
  \normalfont \topsep6\p@\@plus6\p@\relax
  \trivlist
  \item[\hskip\labelsep
        \bfseries
    #1\@addpunct{.}]\ignorespaces
}{%
  \popQED\endtrivlist\@endpefalse
}
\newtheorem{theorem}{Theorem}
\numberwithin{theorem}{section}
\newtheorem{corollary}{Corollary}
\newtheorem*{conjecture*}{Conjecture}
\newtheorem{case}{Case}
\theoremstyle{definition}
\newcounter{Hcase}
\newcounter{Hclaim}
\newcommand{\etal}{et~al.\ }
\def\int(#1){\mathrm{int}(#1)}
\def\ext(#1){\mathrm{ext}(#1)}
\def\Int(#1){\mathrm{Int}(#1)}
\def\Ext(#1){\mathrm{Ext}(#1)}
\def\ad(#1){\mathrm{ad}(#1)}
\def\mad(#1){\mathrm{mad}(#1)}
\def\la(#1){\mathrm{la}(#1)}
\begin{document}%
\title{Strongly light subgraphs in the $1$-planar graphs with minimum degree~$7$}
\author{Tao Wang\,\textsuperscript{a, b, }\footnote{{\tt Corresponding
author: wangtao@henu.edu.cn} }\\
{\small \textsuperscript{a}Institute of Applied Mathematics}\\
{\small Henan University, Kaifeng, 475004, P. R. China}\\
{\small \textsuperscript{b}School of Mathematics and Statistics}\\
{\small Henan University, Kaifeng, 475004, P. R. China}}
\date{May 30, 2015}
\maketitle
\begin{abstract}%
A graph is {\em $1$-planar} if it can be drawn in the plane such that every edge crosses at most one other edge. A connected graph $H$ is {\em strongly light} in a family of graphs $\mathfrak{G}$, if there exists a constant $\lambda$, such that every graph $G$ in $\mathfrak{G}$ contains a subgraph $K$ isomorphic to $H$ with $\deg_{G}(v) \leq \lambda$ for all $v \in V(K)$. In this paper, we present some strongly light subgraphs in the family of $1$-planar graphs with minimum degree~$7$.
\end{abstract}
\section{Introduction}
All graphs considered are finite, simple and undirected unless otherwise stated. We denote by $V(G)$ and $E(G)$ the vertex set and the edge set of $G$. We shall denote by $F(G)$ the set of faces of an embedded graph $G$. The {\em degree} of a vertex $v$ in $G$, denoted by $\deg_{G}(v)$, is the number of edges of $G$ incident with $v$. We denote the minimum and maximum degrees of vertices in $G$ by $\delta(G)$ and $\Delta(G)$, respectively. A {\em wheel} $W_{n}$ is a graph obtained by taking the join of a cycle $C_{n}$ and a single vertex. In an embedded graph $G$, the {\em degree} of a face $f$, denoted by $\deg_{G}(f)$, is the number of edges with which it is incident, each cut edge being counted twice. A $k$-vertex, $k^{+}$-vertex and $k^{-}$-vertex is a vertex of degree $k$, at least $k$ and at most $k$, respectively. Similarly, we can define $k$-face, $k^{+}$-face and $k^{-}$-face. 

A graph is {\em $1$-embeddable} in a surface $S$ if it can be drawn in $S$ such that every edge crosses at most one other edge. In particular, a graph is {\em $1$-planar} if it can be drawn in the plane such that every edge crosses at most one other edge. The concept of $1$-planar graph was introduced by Ringel \cite{MR0187232} in 1965, while he simultaneously colors the vertices and faces of a plane graph such that any pair of adjacent/incident elements receive different colors. Ringel \cite{MR0187232} proved that every $1$-planar graph is $7$-colorable, and conjectured that every $1$-planar graph is $6$-colorable. In 1984, Borodin \cite{MR832128} confirmed this conjecture, and later Borodin \cite{MR1333779} found a better proof for it. Recently, various coloring problems of $1$-planar graphs are considered, see \cite{MR1865580, MR2779909, MR2876230}. 

A connected graph $H$ is {\em strongly light} in a family of graphs $\mathfrak{G}$, if there exists an integer $\lambda$, such that every graph $G$ in $\mathfrak{G}$ contains a subgraph $K$ isomorphic to $H$ with $\deg_{G}(v) \leq \lambda$ for all $v \in V(K)$. A graph $H$ is said to be {\em light} in a family $\mathfrak{G}$ of graphs if at least one member of $\mathfrak{G}$ contains a copy of $H$ and there is an integer $\lambda(H, \mathfrak{G})$ such that each member $G$ of $\mathfrak{G}$ with a copy of $H$ also has a copy $K$ of $H$ such that $\deg_{G}(v) \leq \lambda(H, \mathfrak{G})$ for all $v \in V(K)$. Note that a light subgraph may be not strongly light, for example, the graph $K_{5}$ is light in the family of graphs $\mathfrak{G} = \{\textrm{planar graphs}\} \cup \{K_{6}\}$, but $K_{5}$ is not strongly light in $\mathfrak{G}$ since not every graph in $\mathfrak{G}$ contains a subgraph $K_{5}$. The light subgraphs  are well studied when $\mathfrak{G}$ is a subclass of planar graphs, and we refer the reader to a good survey \cite{MR3004475}.

Fabrici and Madaras \cite{MR2297168} studied the structure of $1$-planar graphs, mainly on the light subgraphs of $1$-planar graphs. They showed that every $3$-connected $1$-planar graph contains an edge with each end having degree at most $20$, and this bound is the best possible. Hud\'{a}k and Madaras \cite{MR2574477} proved that each $1$-planar graph of minimum degree $5$ and girth $4$ contains (i) a 5-vertex adjacent to a vertex of degree at most 6, (ii) a $4$-cycle whose vertices all have degree at most $9$ (the upper bound was further improved to $8$ by Borodin, Dmitriev and Ivanova \cite{MR2543605}), (iii) a star $K_{1,4}$ with all vertices having degree at most $11$. 

In 1965, Ringel \cite{MR0187232} found that each $1$-planar graph has a vertex of degree at most $7$ and the bound is tight. Hud\'{a}k and Madaras \cite{MR2802062} considered strongly light subgraphs in the family of $1$-planar graphs with minimum degree~$7$, and proved the following theorem.

\begin{theorem}[Hud\'{a}k and Madaras \cite{MR2802062}]
Each $1$-planar graph with minimum degree~$7$ contains
\begin{enumerate}[label=(\alph*)]
\item two adjacent $7$-vertices,
\item a $K_{4}$ whose vertices all have degree at most $13$,
\item a $K_{2, 3}^{*}$ whose vertices all have degree at most $13$, where $K_{2, 3}^{*}$ is a graph $K_{2, 3}$ with an extra edge between two vertices of the smaller bipartition,
\item a $W_{4}$ whose vertices all have degree at most $11$.
\end{enumerate}
\end{theorem}

In this paper, we also consider strongly light subgraphs in the family of $1$-planar graphs with minimum degree~$7$. 

\section{Strongly light subgraphs}
Let $G$ be a graph having been drawn in a surface; if we treat all the crossing points as vertices, then we obtain an embedded graph $G^{\dagger}$, and call it {\em the associated graph of $G$}, call the vertices of $G$ {\em true vertices} and the crossing points {\em crossing vertices}. In the associated graph, a $3$-face is called a {\em false $3$-face} if it is incident with a crossing vertex; otherwise, it is a {\em true $3$-face}. Clearly, a false $3$-face is incident with exactly one crossing vertex. Note that the set of crossing vertices in the associated graph is independent. In the figures of this paper, the solid dots denote true vertices and the hollow dots denote crossing vertices, and some degree restrictions are beside the vertices. 

Zhang \etal presented two strongly light subgraphs on four vertices in the family of $1$-planar graphs with minimum degree~$7$. 
\begin{theorem}[Zhang \etal \cite{MR2832629}]%
Each $1$-planar graph with minimum degree~$7$ contains a $K_{4}$ with all vertices of degree at most~$11$.
\end{theorem}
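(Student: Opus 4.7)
The plan is a discharging argument on the associated plane graph $G^{\dagger}$. Suppose for contradiction that $G$ is a $1$-planar graph with $\delta(G)\geq 7$ containing no $K_{4}$ whose four vertices all have degree at most $11$. Call a true vertex \emph{light} if $\deg_{G}(v)\leq 11$ and \emph{heavy} otherwise; crossing vertices have degree $4$ in $G^{\dagger}$. The forbidden configuration is then equivalent to saying that among any four pairwise adjacent true vertices of $G$, at least one is heavy. Assign initial charge $\mu(x)=\deg_{G^{\dagger}}(x)-4$ to each vertex $x$ and $\mu(f)=\deg(f)-4$ to each face $f$ of $G^{\dagger}$; by Euler's formula the total charge is $-8<0$, so after any charge-preserving redistribution some element must remain negative.

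I would use roughly the following rules: (R1) every heavy true vertex sends a fixed amount to each incident triangular face of $G^{\dagger}$ whose other true corners are light; (R2) each face of length at least $5$ distributes its surplus evenly to its incident light true vertices; (R3) crossing vertices, which start with charge $0$, act as transfer stations, passing charge they receive from incident large faces on to the adjacent light true endpoints of their two crossed edges. The constants (fractions tuned against $3,\tfrac{1}{2},\tfrac{1}{3}$) would be chosen so that a light $k$-vertex $v$, which begins with charge $k-4\in\{3,\dots,7\}$ and is incident in $G^{\dagger}$ to at most $k$ faces, finishes non-negative.

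The main obstacle will be the local analysis at a light vertex $v$, especially when $\deg_{G}(v)=7$, and in particular controlling the number of incident triangular faces of $G^{\dagger}$ whose three true corners are all light. Here the $K_{4}$-freeness hypothesis is exactly what rules out long runs of mutually adjacent light neighbours around $v$: I would partition the angular sectors at $v$ into types (pure triangle of $G$ with light third vertex, crossing-created triangle, face of length at least $4$) and argue that any configuration producing too many ``all-light'' triangles would realise a forbidden $K_{4}$ of four light vertices. Once this local invariant is established, verifying that heavy vertices, crossing vertices, and large faces all terminate with non-negative charge follows routinely from the rules, and the resulting contradiction $-8=\sum\mu^{\ast}\geq 0$ completes the argument.
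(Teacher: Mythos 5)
There is a genuine gap, and it is structural rather than a matter of ``tuning constants.'' With your normalization $\mu(x)=\deg_{G^{\dagger}}(x)-4$ and $\mu(f)=\deg(f)-4$, the only elements of negative initial charge are the $3$-faces of $G^{\dagger}$ (charge $-1$): true vertices start at $\geq 3$, crossing vertices at $0$, and $4^{+}$-faces at $\geq 0$. So the whole discharging must be organized around paying the triangles. Your rules do not do this: (R1) feeds only those triangles incident to a \emph{heavy} vertex (and only when the other true corners are light), while (R2) and (R3) send charge to light \emph{vertices}, which are not deficient to begin with. A true $3$-face on three $7$-vertices, or a false $3$-face whose two true corners are $7$-vertices, receives nothing and ends at $-1$. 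The hypothesis ``no $K_{4}$ with all four vertices of degree $\leq 11$'' cannot exclude such faces --- it constrains four mutually adjacent light vertices, not three --- so the final charge is not nonnegative everywhere and the contradiction $-8=\sum\mu^{\ast}\geq 0$ never materializes. Symptomatically, the step you flag as the ``main obstacle'' (the local analysis at a light $7$-vertex) is vacuous under your rules, since light vertices only receive charge; the real difficulty, and the only place the $K_{4}$-freeness can enter, has been left out.

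For comparison: the paper does not prove this statement (it is quoted from Zhang, Wu and Liu), but its proof of the stronger Theorem~5 (a $K_{4}$ of type $(7,8^{-},8^{-},10^{-})$) shows the standard way to wire such an argument. There one uses $\deg(v)-6$ on vertices and $2\deg(f)-6$ on faces, so that $3$-faces are neutral and the \emph{crossing vertices} (charge $-2$) are the only deficient elements; charge is routed from $4^{+}$-faces and from $7^{+}$-vertices (through their incident false $3$-faces) into the crossing vertices. The forbidden-$K_{4}$ hypothesis is invoked exactly when a crossing vertex is incident with four $3$-faces, because then its four true neighbours induce a $K_{4}$ in $G$, which by assumption contains a high-degree vertex contributing enough charge. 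If you want to salvage your version, you must either switch to a normalization in which triangles are not deficient, or add rules by which \emph{all} $7^{+}$-vertices pay their incident $3$-faces (e.g.\ $1/3$ to true and $1/2$ to false ones, as in the paper's Theorems~6--7), and then carry out the genuine case analysis at the vertices or crossing vertices that end up overdrawn.
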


\begin{theorem}[Zhang \etal \cite{MR3240437}]\label{Chorld4Cycle}%
Each $1$-planar graph with minimum degree~$7$ contains a $4$-cycle $C = [x_{1}x_{2}x_{3}x_{4}]$ with a chord $x_{1}x_{3}$, where $\deg(x_{1}) = 7$, $\deg(x_{2}) \leq 10$, $\deg(x_{3}) \leq 8$ and $\deg(x_{4}) \leq 10$.
\end{theorem}

We improve the above two results to the following. A $K_{4}$ is of type $(d_{1}, d_{2}, d_{3}, d_{4})$ if its degrees are $d_{1}, d_{2}, d_{3}$ and $d_{4}$, respectively. Similarly, we can define a $K_{4}$ of type $(d_{1}^{+}, d_{2}^{+}, d_{3}^{+}, d_{4}^{+})$, etc. 
\begin{theorem}%
If $G$ is a $1$-planar graph with minimum degree~$7$, then it contains a $K_{4}$ of the type $(7, 8^{-}, 8^{-}, 10^{-})$.
\end{theorem}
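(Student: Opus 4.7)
The plan is to argue by contradiction using a discharging argument on the associated plane graph $G^{\dagger}$. Suppose $G$ is a $1$-planar graph with $\delta(G) = 7$ containing no $K_{4}$ of type $(7, 8^{-}, 8^{-}, 10^{-})$. Fix a $1$-planar drawing of $G$ minimizing the number of crossings; standard reductions let us assume that no two adjacent edges cross. Form $G^{\dagger}$ by replacing each crossing point with a new $4$-vertex, and give each vertex and each face the initial charge $\mu(v) = \deg_{G^{\dagger}}(v) - 4$ and $\mu(f) = \deg(f) - 4$. Euler's formula gives $\sum \mu = -8$, and under $\delta(G) = 7$ only triangular faces of $G^{\dagger}$ have negative initial charge, each equal to $-1$.

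Next, I would translate the absence of the target $K_{4}$ into local constraints. Theorem~\ref{Chorld4Cycle} produces a chorded $4$-cycle $[x_{1}x_{2}x_{3}x_{4}]$ of type $(7, 10^{-}, 8^{-}, 10^{-})$ in $G$; to prevent this from being extended to a $K_{4}$ of the target type at $x_{1}$, either $x_{2}x_{4} \notin E(G)$ or $\min\{\deg(x_{2}), \deg(x_{4})\} \ge 9$. Iterating this observation around each $7$-vertex of $G$, I would catalog which combinations of triangular faces of $G^{\dagger}$ can share a $7$-vertex, thereby bounding how often a $7$-vertex is incident to a triangle whose other two true vertices are small. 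With these forbidden patterns in hand, I would design discharging rules that move charge from $8^{+}$-vertices and from faces of length at least $5$ toward triangular faces of $G^{\dagger}$, tuning the rates so that each triangle receives at least one unit of charge while every donor retains non-negative charge.

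The principal obstacle is the case analysis at a $7$-vertex $v$: because $\mu(v) = 3$ leaves only a modest reserve, one must carefully limit both the number of triangular faces that $v$ can feed and the amount donated to each. The tight regime is precisely where $v$ has two or more neighbors of degree at most $8$ that are mutually joined by edges or by short paths through crossings, since this is exactly the configuration that would produce a $K_{4}$ of the forbidden type. Classifying the incidences of $v$ in $G^{\dagger}$ by face length and by whether the surrounding vertices are true or crossings should reduce the problem to a finite list of local configurations, each of which is either excluded outright by the non-existence hypothesis, or supplies enough charge to its adjacent triangular faces. Once every vertex and face of $G^{\dagger}$ ends with non-negative charge, we obtain $\sum \mu \ge 0$, contradicting $\sum \mu = -8$, and the theorem follows.
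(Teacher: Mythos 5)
Your write-up is a plan rather than a proof: the discharging rules are never specified, and the case analysis that is supposed to show every triangle of $G^{\dagger}$ can be paid for is entirely deferred (``tuning the rates so that each triangle receives at least one unit'' is exactly the part that needs to be done). More seriously, the one concrete mechanism you offer for exploiting the hypothesis --- Theorem~\ref{Chorld4Cycle} --- cannot play the role you assign to it. That theorem is a global existence statement: it guarantees one chorded $4$-cycle of type $(7,10^{-},8^{-},10^{-})$ somewhere in the graph, so it cannot be ``iterated around each $7$-vertex'' to produce local constraints on the triangular faces at every $7$-vertex; and in any case the deduction it yields (either $x_{2}x_{4}\notin E$ or $\min\{\deg x_{2},\deg x_{4}\}\ge 9$ for that one configuration) is far too weak to drive a discharging argument in which every $3$-face must end nonnegative.

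The missing idea is the structural link between the forbidden $K_{4}$ and the associated graph: in an optimal $1$-embedding, if a crossing vertex is incident with four $3$-faces, then its four true neighbours (the endpoints of the two crossing edges) are pairwise adjacent and hence induce a $K_{4}$ in $G$. The paper's proof assigns charge $\deg-6$ to vertices and $2\deg-6$ to faces, so the only deficient elements are the crossing vertices (charge $-2$), and the counterexample hypothesis --- every $K_{4}$ has type $(8^{+},8^{+},8^{+},8^{+})$, $(7,9^{+},9^{+},9^{+})$, $(7,8^{-},9^{+},9^{+})$ or $(7,8^{-},8^{-},11^{+})$ --- is applied precisely to the $K_{4}$ induced around each crossing vertex surrounded by four $3$-faces, with rates calibrated per degree ($1/6$ from $7$-vertices, $1/4$ from $8$-vertices, $3/8$ from $9$-vertices, $2/5$ from $10$-vertices, $1/2$ from $11^{+}$-vertices, routed through false $3$-faces) so that each such crossing vertex recovers its deficit. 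Your normalization $\deg-4$ puts the deficit on $3$-faces instead, which is workable in principle, but without the crossing-vertex/$K_{4}$ observation you have no way to bring the non-existence hypothesis to bear on the tight configurations, so the argument as sketched cannot be completed.
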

\begin{proof}%
Suppose that $G$ is a connected counterexample to the theorem, which implies that $G$ contains no $K_{4}$ or every copy of $K_{4}$ is of the type $(8^{+}, 8^{+}, 8^{+}, 8^{+})$ or $(7, 9^{+}, 9^{+}, 9^{+})$ or $(7, 8^{-}, 9^{+}, 9^{+})$ or $(7, 8^{-}, 8^{-}, 11^{+})$.

Furthermore, we may assume that $G$ has been $1$-embedded in the plane. Clearly, every face of its associated graph is homeomorphic to an open disk.  Let $K^{\dagger}$ be the associated graph of $G$. By Euler's formula, we have
\begin{equation}\label{EQ1}%
\sum_{v \in V(K^{\dagger})}(\deg_{K^{\dagger}}(v)-6) + \sum_{f \in F(K^{\dagger})}(2\deg_{K^{\dagger}}(f) - 6) = -12.
\end{equation}

We will use the discharging method to complete the proof. The initial charge of every vertex $v$ is $\deg_{K^{\dagger}}(v)-6$, and the initial charge of every face $f$ is $2\deg_{K^{\dagger}}(f) - 6$. By \eqref{EQ1}, the sum of all the elements' charge is $-12$. We then transfer some charge from the $4^{+}$-faces and the $7^{+}$-vertices to crossing vertices, such that the final charge of every crossing vertex becomes nonnegative and the final charge of every $4^{+}$-face and every $7^{+}$-vertex remains nonnegative, and thus the sum of the final charge of vertices and faces is nonnegative, which leads to a contradiction.
\paragraph{The Discharging Rules:}
\begin{enumerate}[label=(R\arabic*)]
\item every $4^{+}$-face donates its redundant charge equally to incident crossing vertices;
\item every $7^{+}$-vertex donates its redundant charge equally to incident false $3$-faces;
\item after applying (R2), every false $3$-face donates its redundant charge to the incident crossing vertex.
\end{enumerate}

By the discharging rules, the final charge of every face and every $7^{+}$-vertex is nonnegative. So it suffices to consider the final charge of crossing vertices in $K^{\dagger}$.

By the construction of $K^{\dagger}$, every face is incident with at most $\left\lfloor \frac{\deg_{K^{\dagger}}(v)}{2} \right\rfloor$ crossing vertices. Thus, every $4^{+}$-face sends at least $1$ to each incident crossing vertex.

Note that every $7^{+}$-vertex $v$ is incident with at most $2\left\lfloor \frac{\deg_{K^{\dagger}}(v)}{2} \right\rfloor$ false $3$-faces. More formally, every $7$-vertex sends at least $\frac{1}{6}$ to each incident false $3$-face; every $8$-vertex sends at least $\frac{1}{4}$ to each incident false $3$-face; every $9$-vertex sends at least $\frac{3}{8}$ to each incident false $3$-face; every $10$-vertex sends at least $\frac{2}{5}$ to each incident false $3$-face; every $11^{+}$-vertex sends at least $\frac{1}{2}$ to each incident false $3$-face. 

Let $w$ be an arbitrary crossing vertex in $K^{\dagger}$. Notice that the four neighbors of $w$ are $7^{+}$-vertices.

If $w$ is incident with at least two $4^{+}$-faces, then its final charge is greater than $4 - 6 + 2 \times 1 = 0$. If $w$ is incident with exactly one $4^{+}$-face, then its final charge is at least $4 - 6 + 1 + 6 \times \frac{1}{6} = 0$.

If there is no crossing vertex which is incident with four $3$-faces, then the sum of the final charge is nonnegative, which leads to a contradiction. So we may assume that $w$ is incident with four $3$-faces. It is obvious that the four neighbors of $w$ induce a $K_{4}$ in $G$. If this $K_{4}$ is of the type $(8^{+}, 8^{+}, 8^{+}, 8^{+})$, then the final charge of $w$ is at least $4 - 6 + 8 \times \frac{1}{4} = 0$; if this $K_{4}$ is of the type $(7, 9^{+}, 9^{+}, 9^{+})$, then the final charge of $w$ is at least $4 - 6 + 2 \times \frac{1}{6} + 6 \times \frac{3}{8} > 0$; if this $K_{4}$ is of the type $(7, 8^{-}, 9^{+}, 9^{+})$, then the final charge of $w$ is at least $4 - 6 + 4 \times \frac{1}{6} + 4 \times \frac{3}{8} > 0$; if this $K_{4}$ is of the type $(7, 8^{-}, 8^{-}, 11^{+})$, then the final charge of $w$ is at least $4 - 6 + 6 \times \frac{1}{6} + 2 \times \frac{1}{2} = 0$.

Finally, all the faces and vertices have nonnegative charge, which leads to a contradiction.
\end{proof}

To the author's knowledge, all the known strongly light graphs have at most five vertices. Now, we give a strongly light graph on 8 vertices in the family of $1$-planar graphs with minimum degree~$7$.
\begin{theorem}\label{W4+}
If $G$ is a $1$-planar graph with minimum degree~$7$, then $G$ contains a subgraph as illustrated in Fig.~\subref{fig:subfig:star}. Moreover, 
(i) every vertex in $\{w_{2}, w_{3}, \dots, w_{7}\}$ has degree at most $23$; (ii) at most one vertex in $\{w_{2}, w_{3}, \dots, w_{7}\}$ is a $12^{+}$-vertex; (iii) if no vertex in $\{w_{2}, w_{3}, w_{5}, w_{7}\}$ is a $7$-vertex, then $w_{2}w_{3}, w_{3}w_{4}, w_{4}w_{5}, w_{5}w_{6}, w_{6}w_{7}, w_{7}w_{1} \in E(G)$. 
\end{theorem}
\begin{figure}%
\centering
\subcaptionbox{\label{fig:subfig:star}}{\includegraphics{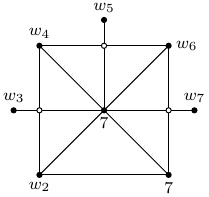}}
\end{figure}
\begin{proof}%
Suppose that $G$ is a connected $1$-planar graph with minimum degree~$7$, and it has been $1$-embedded in the plane. Clearly, every face of its associated graph is homeomorphic to an open disk. Let $K^{\dagger}$ be the associated graph of $G$.

By Euler's formula, we have
\begin{equation}\label{EQ2}%
\sum_{v \in V(K^{\dagger})}(\deg_{K^{\dagger}}(v) - 4) + \sum_{f \in F(K^{\dagger})}(\deg_{K^{\dagger}}(f) - 4) = -8.
\end{equation}

We will use the discharging method to complete the proof. The initial charge of every vertex $v$ is $\deg_{K^{\dagger}}(v) - 4$, and the initial charge of every face $f$ is $\deg_{K^{\dagger}}(f) - 4$. By \eqref{EQ2}, the sum of all the elements' charge is $-8$. We then transfer some charge from the $7^{+}$-vertices to the $3$-faces, such that the final charge of every face and every $8^{+}$-vertex is nonnegative, thus there exists a $7$-vertex such that its final charge is negative and the local structure is desired.

\paragraph{The Discharging Rules:}
\begin{enumerate}[label=(R\arabic*)]
\item every $7^{+}$-vertex sends $\frac{1}{2}$ to each incident false $3$-face and sends $\frac{1}{3}$ to each incident true $3$-face;
\item let $f$ be a face with a face angle $w_{1}ww_{2}$ and $\deg(w) = k \geq 8$,
\begin{enumerate}
\item if $f$ is a $3$-face with $\deg(w_{1}) = 7$ and $\deg(w_{2}) \geq 8$, then $w$ sends $\frac{k-4}{k} - \frac{1}{3}$ to $w_{1}$ through $f$;
\item if $f$ is a $3$-face with $\deg(w_{1}) = \deg(w_{2}) = 7$, then each of $w_{1}$ and $w_{2}$ receives $\frac{k-4}{2k} - \frac{1}{6}$ from $w$ through $f$;
\item if $f$ is a false $3$-face with crossing vertex $w_{1}$ and $w_{1}$ is on the edge $uw$ of $G$, then $w$ sends $\frac{k-4}{2k} - \frac{1}{4}$ to $w_{2}$ through $f$, and additionally $w$ sends $\frac{k-4}{2k} - \frac{1}{4}$ to $u$ through $f$;
\item if $f$ is a $4^{+}$-face with crossing vertex $w_{1}$ and $w_{1}$ is on the edge $uw$ of $G$, then $w$ sends $\frac{k-4}{2k}$ to $u$ through $f$.
\end{enumerate}
\end{enumerate}
By the discharging rules, the final charge of every face and every $8^{+}$-vertex is nonnegative. Hence, there exists a $7$-vertex $w_{0}$ such that its final charge is negative.

If $w_{0}$ is incident with at least one $4^{+}$-face, then its final charge is at least $7 - 4 - 6 \times \frac{1}{2} = 0$. So we may assume that $w_{0}$ is incident with seven $3$-faces. Notice that the number of incident false $3$-faces is even. If $w_{0}$ is incident with at most four false $3$-faces, then its final charge is at least $7 - 4 - 4 \times \frac{1}{2} - 3 \times \frac{1}{3} = 0$. Hence, the vertex $w_{0}$ must be incident with six false $3$-faces and one true $3$-face. We also notice that $w_{0}$ receives less than $\frac{1}{3}$ from all the other vertices; otherwise, its final charge is at least $7 - 4 + \frac{1}{3} - 6 \times \frac{1}{2} - \frac{1}{3} = 0$.

Let $w_{1}w_{0}w_{2}$ be the true $3$-face. If both $w_{1}$ and $w_{2}$ are $8^{+}$-vertices, then $w_{0}$ receives at least $\frac{1}{2} - \frac{1}{3} = \frac{1}{6}$ from each of $w_{1}$ and $w_{2}$ by (R2-a), thus $w_{0}$ receives at least $\frac{1}{3}$ from all the other vertices, a contradiction. Hence, at least one of $w_{1}$ and $w_{2}$ must be a $7$-vertex, so we may assume that $w_{1}$ is a $7$-vertex, see Fig.~\subref{fig:subfig:star}.

(i) Suppose that $w_{0}$ is adjacent to a $24^{+}$-vertex $w$ in $G$. By the discharging rules, the vertex $w_{0}$ receives at least $2 \times (\frac{5}{12} - \frac{1}{4}) = \frac{1}{3}$ from $w$, which leads to a contradiction. Hence, every vertex in $\{w_{2}, w_{3}, \dots, w_{7}\}$ has degree at most~$23$.

(ii) If at least two vertices in $\{w_{2}, w_{3}, \dots, w_{7}\}$ are $12^{+}$-vertices, then $w_{0}$ will receive at least $4 \times (\frac{1}{3} - \frac{1}{4}) = \frac{1}{3}$, which leads to a contradiction. Hence, at most one vertex in $\{w_{2}, w_{3}, \dots, w_{7}\}$ is a $12^{+}$-vertex.

(iii) Suppose, to derive a contradiction, that $w_{2}w_{3}, w_{3}w_{4}, w_{4}w_{5}, w_{5}w_{6}, w_{6}w_{7}, w_{7}w_{1} \in E(G)$ does not hold. Thus, at least one crossing vertex in Fig.~\subref{fig:subfig:star} is incident with a $4^{+}$-face. By (R2-d), the vertex $w_{0}$ receives at least $\frac{1}{4}$ from a $8^{+}$-vertex through a $4^{+}$-face. By (R2-b), the vertex $w_{0}$ receives at least $\frac{1}{4} - \frac{1}{6} = \frac{1}{12}$ from $w_{2}$ through the true $3$-face $w_{0}w_{1}w_{2}$, thus it receives at least $\frac{1}{4} + \frac{1}{12} = \frac{1}{3}$ from all the other vertices, which derives a contradiction.   
\end{proof}
\begin{corollary}[Hud\'{a}k and Madaras \cite{MR2802062}]
If $G$ is a $1$-planar graph with minimum degree~$7$, then it contains an edge such that each end has degree exactly $7$. 
\end{corollary}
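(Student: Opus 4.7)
The plan is to derive the corollary immediately from the previous theorem, with no new discharging step. The theorem asserts, for every $1$-planar graph $G$ with $\delta(G)=7$, the existence of the configuration drawn in Fig.~\subref{fig:subfig:star}, whose central vertex $w_{0}$ is a $7$-vertex incident with exactly one true $3$-face, namely $w_{1}w_{0}w_{2}$. Reading the proof of the theorem, at least one of $w_{1}, w_{2}$ must itself be a $7$-vertex: indeed, in the step where $w_{0}$ is pinned down, it is shown that if both $w_{1}$ and $w_{2}$ were $8^{+}$-vertices, then by rule~\ref{RC} the vertex $w_{0}$ would receive at least $2(1/2-1/3)=1/3$ across that true face, contradicting the charge deficit that forced $w_{0}$ to be selected in the first place.

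So my first step is to invoke the theorem and, among the two endpoints of the true $3$-face at $w_{0}$, to name a $7$-vertex $w_{1}$. The second and final step is to observe that, because $w_{1}w_{0}w_{2}$ is a true $3$-face of the associated graph $K^{\dagger}$, all three of its sides are genuine edges of $G$ rather than fragments of crossed edges. In particular $w_{0}w_{1} \in E(G)$, and both of its endpoints have degree $7$ in $G$, so this is an edge of type $(7,7)$.

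There is no real obstacle here: the corollary is essentially a restatement of an intermediate fact already produced inside the proof of the theorem. The only point that warrants a sentence is the justification that a true $3$-face of $K^{\dagger}$ contributes an edge of $G$ between each pair of its corners, which is immediate from the definition of the associated graph, since a true $3$-face has no crossing vertex on its boundary.
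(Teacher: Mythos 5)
Your proposal is correct and matches the paper's (implicit) argument: the corollary is stated without separate proof precisely because the configuration of the preceding theorem already contains the true $3$-face $w_{1}w_{0}w_{2}$ with $\deg(w_{0})=\deg(w_{1})=7$, and a true face of the associated graph has only genuine edges of $G$ on its boundary, so $w_{0}w_{1}$ is the desired $(7,7)$-edge.
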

\begin{corollary}%
Every $1$-planar graph with minimum degree~$7$ contains a $K_{1, 7}$ with the center of degree $7$ and the other vertices of degree at most $23$.
\end{corollary}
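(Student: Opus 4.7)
The plan is to derive the corollary as an essentially immediate consequence of the preceding theorem, with no new discharging or structural argument required. First I would apply that theorem to the given $1$-planar graph $G$ of minimum degree $7$ to obtain a $7$-vertex $w_{0}$ together with the local configuration shown in Fig~\subref{fig:subfig:star}, in which seven vertices $w_{1}, \ldots, w_{7}$ appear as neighbors of $w_{0}$ in $G$.

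Next I would verify that these seven vertices are precisely all of the neighbors of $w_{0}$ in $G$, not merely some of them. Since $\deg_{G}(w_{0}) = 7$, the vertex $w_{0}$ has exactly seven incident edges in $G$, and the configuration supplies seven distinct endpoints $w_{1}, \ldots, w_{7}$ of these edges (in the associated graph $K^{\dagger}$, the seven $3$-faces incident with $w_{0}$ exhaust its seven incident edges and hence its seven neighbors in $G$). Consequently $w_{0}$ together with $\{w_{1}, \ldots, w_{7}\}$ spans a copy of $K_{1,7}$ centered at $w_{0}$, with $\deg_{G}(w_{0}) = 7$ by construction.

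Finally, the \emph{moreover} clause of the theorem states that every $w_{i}$ is a $23^{-}$-vertex, which is exactly the degree bound required for the seven leaves. Combining the two facts yields the corollary. The only point that warrants attention, rather than a genuine obstacle, is the identification of $\{w_{1}, \ldots, w_{7}\}$ with the full neighborhood of $w_{0}$; once this is noted, there is nothing further to prove.
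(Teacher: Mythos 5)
Your proposal is correct and matches the paper, which states this corollary without further argument as an immediate consequence of the preceding theorem: since $\deg_G(w_0)=7$, the vertices $w_1,\dots,w_7$ of the configuration are exactly the neighbors of $w_0$ in $G$, and the \emph{moreover} clause bounds each of their degrees by $23$. Nothing more is needed.
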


By \autoref{W4+}, the wheel $W_{4}$ is strongly light in the family of $1$-planar graphs with minimum degree~$7$. In the next theorem, we further improve the degree restriction on each vertex in $W_{4}$. 

\begin{figure}%
\ContinuedFloat
\centering
\subcaptionbox{\label{fig:subfig:W_41}}{\includegraphics[width = 0.15\textwidth]{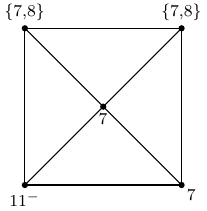}}\hfill~
\subcaptionbox{\label{fig:subfig:W_42}}{\includegraphics[width = 0.15\textwidth]{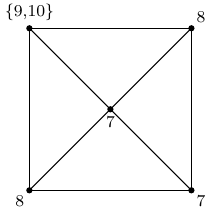}}\hfill~
\subcaptionbox{\label{fig:subfig:W_43}}{\includegraphics[width = 0.15\textwidth]{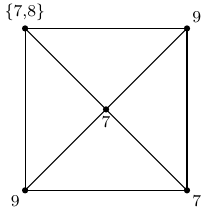}}\hfill~
\subcaptionbox{\label{fig:subfig:W_44}}{\includegraphics[width = 0.15\textwidth]{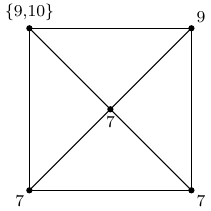}}\hfill~
\subcaptionbox{\label{fig:subfig:W_45}}{\includegraphics[width = 0.15\textwidth]{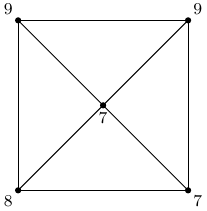}}
\end{figure}

\begin{theorem}\label{LW4}
If $G$ is a 1-planar graph with minimum degree 7, then $G$ contains at least one subgraph as illustrated in Fig.~\subref{fig:subfig:W_41}--\subref{fig:subfig:W_45}.
\end{theorem}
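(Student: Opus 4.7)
The plan is to mimic and extend the discharging argument of the previous theorem. Take $G$ to be a connected $1$-planar graph with minimum degree $7$ that has been optimally $1$-embedded in the plane, form the associated plane graph $K^{\dagger}$, and suppose for contradiction that $G$ contains none of the configurations in Fig~\subref{fig:subfig:W_41}--\subref{fig:subfig:W_48}. Using Euler's identity \eqref{EQ2} with initial charge $\deg_{K^{\dagger}}(v)-4$ at each vertex and $\deg_{K^{\dagger}}(f)-4$ at each face, the total charge is $-8$; the goal is to redistribute charge by rules of the same flavour as (R1)--(R2) so that the only elements whose final charge can stay negative are $7$-vertices, and then show that the forbidden list excludes any such negative $7$-vertex, yielding a contradiction.

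From the previous theorem I already know precisely what a negatively charged $7$-vertex $w_0$ has to look like: it is incident to six false $3$-faces and one true $3$-face $w_1w_0w_2$, at least one of $w_1,w_2$ (say $w_1$) is itself a $7$-vertex, every neighbor of $w_0$ in $G$ is a $23^{-}$-vertex, and at most one neighbor of $w_0$ is a $12^{+}$-vertex. I would take this as the starting point and carry out a second-level analysis around the edge $w_0w_1$. The six crossing vertices incident with $w_0$ split into two arcs separated by $w_1$ and $w_2$; each crossing vertex identifies a pair of second neighbors of $w_0$ that are $7^{+}$-vertices in $G$ and are themselves joined by an edge. Each subfigure in Fig~\subref{fig:subfig:W_41}--\subref{fig:subfig:W_48} should encode one combinatorial possibility for the pattern of crossings on the two arcs, together with the degree of $w_2$ ($7$ vs.\ $8^{+}$) and the placement (if any) of the unique permitted $12^{+}$-neighbor.

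To close each case, I would apply the refined per-angle bounds of rule \ref{RC} from the previous proof to compute a sharp lower bound for the total charge received by $w_0$. The idea is that any arrangement outside Fig~\subref{fig:subfig:W_41}--\subref{fig:subfig:W_48} forces either (i) an extra $8^{+}$-neighbor that contributes at least $\frac{k-4}{2k}-\frac{1}{4}$ through a shared false $3$-face, or (ii) an $8^{+}$-vertex $w_2$ contributing at least $\frac{1}{2}-\frac{1}{3}=\frac{1}{6}$ through the true $3$-face, pushing the charge received by $w_0$ above the $1/3$ threshold it must stay below. Consequently $w_0$ has nonnegative final charge, contradicting the assumption.

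The main obstacle will be the structural enumeration at the level of the second neighborhood: the eight figures should represent all the distinct local drawings around the edge $w_0w_1$ that simultaneously meet the optimality of the embedding, the parity constraint on false $3$-faces around $w_0$ and $w_1$, and the sharp budget of one $12^{+}$-neighbor. I would need to (i) rule out non-optimal local drawings to keep the case list finite, (ii) correctly account for $8^{+}$-vertices that are shared between the incident faces of $w_0$ and $w_1$ so that their contributions are not double-counted, and (iii) verify completeness of the eight pictured cases by exhausting the symmetric placements of crossings and of $w_2$. Once the enumeration is pinned down, each case reduces to a calculation of the same kind performed at the end of the previous theorem's proof.
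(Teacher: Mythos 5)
There is a genuine gap. Your plan is to keep the discharging rules of the preceding theorem verbatim and then refine the case analysis, but the paper's proof of this statement \emph{changes} the crucial rule: for a false $3$-face with crossing vertex, a $k$-vertex ($k\geq 8$) now sends its entire surplus $\frac{k-4}{k}-\frac{1}{2}$ to the true vertex of that face, instead of splitting it as $\frac{k-4}{2k}-\frac{1}{4}$ to the face-neighbour and $\frac{k-4}{2k}-\frac{1}{4}$ through the crossing vertex to the other $G$-neighbour. This modification is exactly what makes the degree caps in Fig~\subref{fig:subfig:W_41}--\subref{fig:subfig:W_48} attainable. For instance, in the case where both $w_{4}$ and $w_{6}$ have degree in $\{7,8\}$, the paper bounds the charge reaching $w_{0}$ from $w_{2}$ by $\bigl(\frac{k-4}{k}-\frac{1}{2}\bigr)+\bigl(\frac{k-4}{2k}-\frac{1}{6}\bigr)<\frac{1}{3}$, forcing $\deg(w_{2})\leq 11$; with your (old) amounts the same computation only forces $\deg(w_{2})\leq 15$, so the configurations in the figures would not follow. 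In other words, the quantitative heart of the theorem is a redesigned rule that concentrates all surplus on the true vertices $w_{2},w_{4},w_{6}$ adjacent to $w_{0}$ along uncrossed edges, and your proposal never identifies this.

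Relatedly, your structural picture points at the wrong target. The eight figures are not an enumeration of second-neighbourhood crossing patterns around the edge $w_{0}w_{1}$; they are the star of Fig~\subref{fig:subfig:star} decorated with degree constraints on the three true neighbours $w_{2},w_{4},w_{6}$ of $w_{0}$ (the paper's three cases are: both $\deg(w_{4}),\deg(w_{6})\in\{7,8\}$; exactly one; neither), obtained by the same one-level computation at $w_{0}$ as before, just with the new rule. Also note a factual slip: a $7$-vertex incident with six false $3$-faces and one true $3$-face has only three incident crossing vertices (each shared by two consecutive false faces), not six, so the ``two arcs of six crossing vertices'' you propose to enumerate do not exist. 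As written, the proposal is a plan whose decisive step --- which transfer amounts yield which degree caps --- is left open and, if carried out with the rules you specify, would not reach the stated configurations.
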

\begin{proof}%
Suppose that $G$ is a connected $1$-planar graph with minimum degree~$7$, and it has been $1$-embedded in the plane. Clearly, every face of its associated graph is homeomorphic to an open disk. Let $K^{\dagger}$ be the associated graph of $G$.

By Euler's formula, we have
\begin{equation}\label{EQ3}%
\sum_{v \in V(K^{\dagger})}(\deg_{K^{\dagger}}(v) - 4) + \sum_{f \in F(K^{\dagger})}(\deg_{K^{\dagger}}(f) - 4) = -8.
\end{equation}

We will use the discharging method to complete the proof. The initial charge of every vertex $v$ is $\deg_{K^{\dagger}}(v) - 4$, and the initial charge of every face $f$ is $\deg_{K^{\dagger}}(f) - 4$. By \eqref{EQ3}, the sum of all the elements' charge is $-8$. We then transfer some charge from the $7^{+}$-vertices to the $3$-faces, such that the final charge of every face and every $8^{+}$-vertex is nonnegative, thus there exists a $7$-vertex such that its final charge is negative and the local structure is desired.

\paragraph{The Discharging Rules:}
\begin{enumerate}[label=(R\arabic*)]
\item every $7^{+}$-vertex sends $\frac{1}{2}$ to each incident false $3$-face and sends $\frac{1}{3}$ to each incident true $3$-face;
\item let $f$ be a face with a face angle $w_{1}ww_{2}$ and $\deg(w) = k \geq 8$,
\begin{enumerate}
\item if $f$ is a $3$-face with $\deg(w_{1}) = 7$ and $\deg(w_{2}) \geq 8$, then $w$ sends $\frac{k-4}{k} - \frac{1}{3}$ to $w_{1}$ through $f$;
\item if $f$ is a $3$-face with $\deg(w_{1}) = \deg(w_{2}) = 7$, then each of $w_{1}$ and $w_{2}$ receives $\frac{k-4}{2k} - \frac{1}{6}$ from $w$ through $f$;
\item if $f$ is a false $3$-face with crossing vertex $w_{1}$, then $w$ sends $\frac{k-4}{k} - \frac{1}{2}$ to $w_{2}$ through $f$.
\end{enumerate}
\end{enumerate}
By the discharging rules, the final charge of every face and every $8^{+}$-vertex is nonnegative. Hence, there exists a $7$-vertex $w_{0}$ such that its final charge is negative.

If $w_{0}$ is incident with at least one $4^{+}$-face, then its final charge is at least $7 - 4 - 6 \times \frac{1}{2} = 0$. So we may assume that $w_{0}$ is incident with seven $3$-faces. Notice that the number of incident false $3$-faces is even. If $w_{0}$ is incident with at most four false $3$-faces, then its final charge is at least $7 - 4 - 4 \times \frac{1}{2} - 3 \times \frac{1}{3} = 0$. Hence, the vertex $w_{0}$ must be incident with six false $3$-faces and one true $3$-face. We also notice that $w_{0}$ receives less than $\frac{1}{3}$ from all the other vertices; otherwise, its final charge is at least $7 - 4 + \frac{1}{3} - 6 \times \frac{1}{2} - \frac{1}{3} = 0$.

Let $w_{1}w_{0}w_{2}$ be the true $3$-face. If both $w_{1}$ and $w_{2}$ are $8^{+}$-vertices, then $w_{0}$ receives at least $\frac{1}{2} - \frac{1}{3} = \frac{1}{6}$ from each of $w_{1}$ and $w_{2}$ by (R2-a), thus $w_{0}$ receives at least $\frac{1}{3}$ from all the other vertices, a contradiction. Hence, at least one of $w_{1}$ and $w_{2}$ must be a $7$-vertex, so we may assume that $w_{1}$ is a $7$-vertex, see Fig.~\subref{fig:subfig:star}.

\begin{case}
Both $\deg(w_{4})$ and $\deg(w_{6})$ belong to $\{7, 8\}$. 
\end{case}
Since the vertex $w_{0}$ receives less than $\frac{1}{3}$ from the vertex $w_{2}$, it follows that $(\frac{\deg(w_{2}) - 4}{\deg(w_{2})} - \frac{1}{2}) + (\frac{\deg(w_{2}) - 4}{2\deg(w_{2})}- \frac{1}{6}) < \frac{1}{3}$ and $\deg(w_{2}) < 12$, see Fig.~\subref{fig:subfig:W_41}.

\begin{case}
Exactly one of $\deg(w_{4})$ and $\deg(w_{6})$ belongs to $\{7, 8\}$.
\end{case}
Note that $\max\{\deg(w_{4}), \deg(w_{6})\} \geq 9$, if $w_{2}$ is a $10^{+}$-vertex, then $w_{0}$ receives at least $2 \times (\frac{5}{9} - \frac{1}{2}) + (\frac{3}{5} - \frac{1}{2}) + (\frac{3}{10} - \frac{1}{6}) > \frac{1}{3}$, a contradiction. So we may assume that $w_{2}$ is a $9^{-}$-vertex. If $w_{2}$ is a $7$-vertex and $\max\{\deg(w_{4}), \deg(w_{6})\} \geq 12$, then $w_{0}$ will receive at least $2 \times (\frac{2}{3} - \frac{1}{2}) = \frac{1}{3}$, which is a contradiction.  If $w_{2}$ is a $8$-vertex and $\max\{\deg(w_{4}), \deg(w_{6})\} \geq 11$, then $w_{0}$ will receive at least $2 \times (\frac{7}{11} - \frac{1}{2}) + (\frac{1}{4} - \frac{1}{6}) > \frac{1}{3}$, a contradiction. If $w_{2}$ is a $9$-vertex and $\max\{\deg(w_{4}), \deg(w_{6})\} \geq 10$, then $w_{0}$ receives at least $2 \times (\frac{3}{5} - \frac{1}{2}) + (\frac{5}{9} - \frac{1}{2}) + (\frac{5}{18} - \frac{1}{6}) = \frac{11}{30} > \frac{1}{3}$, which leads to a contradiction. In summary, if $w_{2}$ is a $7$-vertex, then $\max\{\deg(w_{4}), \deg(w_{6})\} \in \{9, 10, 11\}$, and thus $G$ contains a subgraph isomorphic to that in Fig.~\subref{fig:subfig:W_41}; if $w_{2}$ is a $8$-vertex, then $\max\{\deg(w_{4}), \deg(w_{6})\} \in \{9, 10\}$, and thus $G$ contains a subgraph isomorphic to that in Fig.~\subref{fig:subfig:W_41} or Fig.~\subref{fig:subfig:W_42}; if $w_{2}$ is a $9$-vertex, then $\max\{\deg(w_{4}), \deg(w_{6})\} = 9$, and thus $G$ contains a subgraph isomorphic to that in Fig.~\subref{fig:subfig:W_43}, Fig.~\subref{fig:subfig:W_44} or Fig.~\subref{fig:subfig:W_45}.

\begin{case}
Both $\deg(w_{4})$ and $\deg(w_{6})$ are at least $9$. 
\end{case}
If $w_{2}$ is a $9^{+}$-vertex, then the vertex $w_{0}$ will receive at least $(\frac{5}{18} -\frac{1}{6}) +  5 \times (\frac{5}{9} - \frac{1}{2}) > \frac{1}{3}$, a contradiction. So we may assume that $w_{2}$ is a $7$- or $8$-vertex. If $\min\{\deg(w_{4}), \deg(w_{6})\} \geq 10$, then the vertex $w_{0}$ will receive at least $4 \times (\frac{3}{5} - \frac{1}{2}) = \frac{2}{5} > \frac{1}{3}$, a contradiction. Hence, we have that $\min\{\deg(w_{4}), \deg(w_{6})\} = 9$. If $w_{2}$ is a $7$-vertex and $\max\{\deg(w_{4}), \deg(w_{6})\} \geq 11$, then the vertex $w_{0}$ will receive at least $2 \times (\frac{7}{11} - \frac{1}{2}) + 2 \times (\frac{5}{9} - \frac{1}{2}) > \frac{1}{3}$, a contradiction. If $w_{2}$ is a $8$-vertex and $\max\{\deg(w_{4}), \deg(w_{6})\} \geq 10$, then $w_{0}$ will receive at least $2 \times (\frac{3}{5} - \frac{1}{2}) + 2 \times (\frac{5}{9} - \frac{1}{2}) + (\frac{1}{4} - \frac{1}{6}) > \frac{1}{3}$, which is a contradiction. In summary, if $w_{2}$ is a $7$-vertex, then $G$ contains a subgraph as illustrated in Fig.~\subref{fig:subfig:W_44}; if $w_{2}$ is a $8$-vertex, then $G$ contains a subgraph as illustrated in Fig.~\subref{fig:subfig:W_45}. 
\end{proof}

\begin{corollary}%
If $G$ is a 1-planar graph with minimum degree 7, then $G$ contains a triangle having vertex degree $7, 7$ and at most $9$, respectively. 
\end{corollary}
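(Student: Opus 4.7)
The approach is to extract a $(7,7,9^{-})$-triangle from each of the eight configurations supplied by the previous theorem. I would start by invoking that theorem to fix a $7$-vertex $w_0$ whose labeled $K^{\dagger}$-neighborhood is as in Fig~\subref{fig:subfig:star}: the unique true $3$-face at $w_0$ is $w_1w_0w_2$, the cyclic $K^{\dagger}$-order of neighbors of $w_0$ is $w_1,w_2,w_3,w_4,w_5,w_6,w_7$ with $w_3,w_5,w_7$ crossing vertices and $w_1,w_2,w_4,w_6$ true vertices, and $\deg_G(w_1)=7$.

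Next I would unpack the adjacencies around $w_0$ that are forced by the $1$-embedding. Since $K^{\dagger}$ has no edge joining two crossing vertices, the six false $3$-faces at $w_0$ must alternate true--crossing--true--crossing around $w_0$, which pins down exactly the pattern above. Each of $w_3,w_5,w_7$ lies on two consecutive false $3$-faces at $w_0$, and reading off its four $K^{\dagger}$-neighbors forces a $G$-edge between the two true neighbors of $w_0$ on either side of it. This yields $w_2w_4,\, w_4w_6,\, w_6w_1\in E(G)$, and together with the edge $w_1w_2$ from the true $3$-face and the four spokes from $w_0$ to $\{w_1,w_2,w_4,w_6\}$ it realises a wheel $W_4$ centred at $w_0$ with rim $w_1w_2w_4w_6$. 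In particular both $w_0w_1w_2$ and $w_0w_1w_6$ are triangles containing the two $7$-vertices $w_0$ and $w_1$.

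Finally I would walk through the eight configurations. In Figs~\subref{fig:subfig:W_42}--\subref{fig:subfig:W_48} the proof of the preceding theorem already forces $\deg(w_2)\le 9$, so the triangle $w_0w_1w_2$ is of type $(7,7,9^{-})$ and we are done. The only configuration in which $\deg(w_2)$ may exceed $9$ is Fig~\subref{fig:subfig:W_41}; there the same analysis forces $\deg(w_4),\deg(w_6)\in\{7,8\}$, and so the alternate triangle $w_0w_1w_6$ is of type $(7,7,8^{-})$, which is still of the required type. The one non-routine step, and the place where the argument could fail if executed carelessly, is the verification of the rim edges of the wheel (in particular $w_1w_6$), since it is precisely this edge that allows the Fig~\subref{fig:subfig:W_41} case to be closed via the alternate triangle rather than via the true $3$-face; once the rim edges are in hand, every case closes by inspection.
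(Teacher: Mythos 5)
Your proposal is correct and follows essentially the same route as the paper, which presents the corollary as an immediate consequence of the configurations in Fig~\subref{fig:subfig:W_41}--\subref{fig:subfig:W_48} (these already depict the wheel with rim $w_1w_2w_4w_6$, so the edge $w_1w_6$ you verify is part of the stated configuration). Your case split is exactly the intended reading: in Figs~\subref{fig:subfig:W_42}--\subref{fig:subfig:W_48} the true-face triangle $w_0w_1w_2$ has $\deg(w_2)\le 9$, and in Fig~\subref{fig:subfig:W_41} the triangle $w_0w_1w_6$ with $\deg(w_4),\deg(w_6)\in\{7,8\}$ gives the required $(7,7,9^{-})$ triangle.
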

As an immediate consequence of \autoref{LW4}, the following corollary is an improvement of  \autoref{Chorld4Cycle}.
\begin{corollary}%
If $G$ is a 1-planar graph with minimum degree 7, then $G$ contains a $4$-cycle $C = [x_{1}x_{2}x_{3}x_{4}]$ with a chord $x_{1}x_{3}$, where $\deg(x_{1}) = 7$, $\deg(x_{2}) \leq 9$, $\deg(x_{3}) \leq 8$ and $\deg(x_{4}) \leq 9$.
\end{corollary}

\begin{corollary}[\cite{MR2916329}]%
If $G$ is a 1-planar graph with minimum degree 7, then $G$ contains a copy of $K_{1} \vee (K_{1} \cup K_{2})$ with all the vertices of degree at most $9$.
\end{corollary}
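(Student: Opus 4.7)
The plan is to invoke the preceding theorem and extract the paw directly from each of its eight configurations. Retain the notation of that proof: $w_{0}$ is a $7$-vertex whose seven incident faces consist of the unique true $3$-face $w_{0}w_{1}w_{2}$ together with six false $3$-faces, so the cyclic sequence of neighbours of $w_{0}$ in $K^{\dagger}$ reads $w_{1},w_{2},c_{1},w_{4},c_{2},w_{6},c_{3}$, with each $c_{i}$ a crossing vertex. At each $c_{i}$, identifying which pair of its four neighbours in $K^{\dagger}$ lies on one of the two original $G$-edges through that crossing yields the $G$-edges $w_{1}w_{2}$, $w_{2}w_{4}$, $w_{4}w_{6}$, $w_{6}w_{1}$. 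Hence $\{w_{0},w_{1},w_{2},w_{4},w_{6}\}$ spans a wheel $K_{1}\vee C_{4}$ in $G$, giving four triangles at $w_{0}$: namely $w_{0}w_{1}w_{2}$, $w_{0}w_{2}w_{4}$, $w_{0}w_{4}w_{6}$ and $w_{0}w_{6}w_{1}$.

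Any such triangle $w_{0}w_{i}w_{j}$ combined with any further neighbour of $w_{0}$ attached as a pendant edge at $w_{0}$ contains a copy of $K_{1}\vee(K_{1}\cup K_{2})$. Since $\deg_{G}(w_{0})=\deg_{G}(w_{1})=7$ are automatic, it is enough to exhibit, in each figure, such a triangle and pendant whose two remaining vertices both have degree at most $9$.

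For the configurations in Fig~\subref{fig:subfig:W_42}--\subref{fig:subfig:W_48} the case analysis of the preceding theorem forces $\deg_{G}(w_{2})\le 9$ together with $\min\{\deg_{G}(w_{4}),\deg_{G}(w_{6})\}\le 9$, so the triangle $w_{0}w_{1}w_{2}$ with pendant $w_{0}w_{k}$, where $k\in\{4,6\}$ is chosen with $\deg_{G}(w_{k})\le 9$, is the required paw. In Fig~\subref{fig:subfig:W_41} the bound $\deg_{G}(w_{2})\le 11$ is too weak, but there $\deg_{G}(w_{4}),\deg_{G}(w_{6})\in\{7,8\}$; the alternative triangle $w_{0}w_{4}w_{6}$, available precisely because of the wheel structure, with pendant $w_{0}w_{1}$ then yields a paw whose four vertices all have degree at most $8$.

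The only genuine subtlety is handling Fig~\subref{fig:subfig:W_41}: the obvious triangle $w_{0}w_{1}w_{2}$ cannot be used because $w_{2}$ may have degree up to $11$, and it is precisely the second family of triangles supplied by the wheel (rather than only the true $3$-face $w_{0}w_{1}w_{2}$) that saves the argument.
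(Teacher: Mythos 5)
Your proposal is correct and follows the route the paper intends: the corollary is stated as an immediate consequence of the eight wheel configurations of the preceding theorem, and your verification (triangle $w_{0}w_{1}w_{2}$ plus a low-degree spoke in Figs~(c)--(i), and the triangle $w_{0}w_{4}w_{6}$ plus the spoke $w_{0}w_{1}$ in Fig~(b), where $w_{2}$ is only $11^{-}$) is exactly the intended extraction, with the rim edges $w_{2}w_{4}$, $w_{4}w_{6}$, $w_{6}w_{1}$ justified correctly from the crossings. Only a cosmetic slip: the edge $w_{1}w_{2}$ comes from the true $3$-face $w_{1}w_{0}w_{2}$, not from a crossing vertex.
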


\vskip 0mm \vspace{0.3cm} \noindent{\bf Acknowledgments.} The author was supported by NSFC (11101125) and partially supported by the Fundamental Research Funds for Universities in Henan. The author would like to thank the anonymous reviewers for their valuable comments and assistance on earlier drafts.

\end{document}